\documentclass[11pt]{article}
\usepackage{amscd}
\usepackage{amsfonts}
\usepackage{amsmath,amsthm,hyperref}
\usepackage[all]{xy}
\usepackage{amsmath,amsthm,hyperref}
\usepackage{amsmath,amssymb,amsthm,latexsym}
\usepackage{amscd}
\headsep=10mm\headheight=10mm\topmargin=-40pt \oddsidemargin=10pt
\evensidemargin=10pt
\textheight=220truemm\textwidth=160truemm\footskip=32pt
\newtheorem{theorem}{Theorem}[section]
\newtheorem{proposition}[theorem]{Proposition}
\newtheorem{definition}[theorem]{Definition}
\newtheorem{corollary}[theorem]{Corollary}

\theoremstyle{remark}
\newtheorem{remark}[theorem]{Remark}
\newtheorem{example}[theorem]{\bf Example}

\begin{document}
\title{\bf{Generalized polar transforms of spacelike isothermic surfaces}}
\author{Peng Wang\footnote{ Supported by Program for Young Excellent Talents in Tongji University and the Tianyuan Foundation of China, grant 10926112.} }
\date{}
\maketitle

\begin{center}
{\bf Abstract}
\end{center}
In this paper, we generalize the polar transforms of spacelike isothermic surfaces in $Q^4_1$ to n-dimensional pseudo-Riemannian space forms $Q^n_r$. We show that there exist $c-$polar spacelike isothermic surfaces derived from a spacelike isothermic surface in $Q^n_r$, which are into $S^{n+1}_r(c)$, $H^{n+1}_{r-1}(c)$ or $Q^n_r$ depending on $c>0,<0,$ or $=0$. The $c-$polar isothermic surfaces can be characterized as generalized $H-$surfaces with null minimal sections. We also prove that if both the original surface and its $c-$polar surface are closed immersion, then they have the same Willmore functional. As examples, we discuss some product surfaces and compute the $c-$polar transforms of them. In the end, we derive the permutability theorems for $c-$polar transforms and Darboux transform and spectral
transform of
isothermic surfaces. \\

{\bf Keywords:} Spacelike isothermic surfaces; $c-$polar transform; generalized H-surfaces;
Christoffel transform; Darboux transform; spectral transform; permutability theorem.\\


{\bf MSC2010: 53A30, 53A07, 53B30}\\

\section{Introduction}

As a classical topic in differential
geometry, isothermic surfaces are well known as that they admit various kinds of transforms to derive new isothermic surfaces, such as the Christoffel transform
(the dual
isothermic surface), the
spectral transforms (also known as T-transforms, Bianchi
transforms or Calapso transforms), and the Darboux transforms. And there are also permutability theorems relating
them (\cite{Bur},\cite{Jer}). However, it was revealed only in the past 20 years \cite{CGS, BHPP,FP, Bur, BDPT} that,
there is a structure of integrable system underlying the theory
about isothermic surfaces, which has its root in the transforms of isothermic surfaces.

In recent years, the integrable system theory of isthermic surfaces is used to other cases to study transforms of isothermic surfaces. Dussan \cite{Du}, Fujioka and Inoguchi \cite{FI}, began to study
isothermic surfaces in pseudo-Riemannian space forms. And Zuo et al
\cite{ZCC} generalized the Darboux transform of isothermic
surfaces to the pseudo-Riemannian space forms using the methods
developed by Burstall in \cite{Bur} and Bruck et al in
\cite{BDPT}.

On the other hand, in \cite{Ma-W1,Ma-W2}, Ma and Wang found that there exists a natural transform of spacelike surfaces in $Q^4_1$, the
conformal compactification of the 4-dimensional Lorentzian space
forms. The key observation is
that in this codim-2 case, the normal plane at any point is
Lorentzian. The two null lines $[L],[R]$ in this plane define two
conformal maps into $Q^4_1$, called the \emph{ left} and the
\emph{right polar surface}, respectively. Conversely, $Y$ is also
the right polar surface of $[L]$, and the left polar surface of
$[R]$ (when $[L]$ and $[R]$ are immersions). It is proved in \cite{Ma-W1,Ma-W2} that these transforms preserve
the Willmore property and isothermic property.

In this paper we generalize this produce to the higher codimension
by a key observation that the conformal normal bundle of a spacelike isothermic
surface is flat, which allows us to derive a parallel section of the normal bundle conformal to the isothermic surface. So we can define the polar transforms
locally. A new isothermic surface produced in this way is neither
the spectral transform nor the Darboux transform of $[Y]$. It can be looked as  the generalization of the Christoffel transform for isothermic surface, see Section 3 for details. It is also interesting to see that all $c-$polar surfaces are generalized $H-$surfaces with null minimal sections (when c=0, it should be looked as surfaces in $\mathbb{R}^{n+2}_{r+1}$, see Section 3 for details).
We also
discuss the moduli space of $c-$polar transforms of a spacelike
isothermic surface in $Q^n_r$ in the meaning of isometric equivalence. See Theorem~\ref{thm-polar}.

Since $c-$polar transforms can be viewed as generalized Christoffel transforms, it is natural to expect that $c-$polar
transforms commute with the spectral transform and the Darboux
transform. Similar to \cite{Ma-W2}, we derive two of such
\emph{permutability theorems}, see
Theorem~\ref{thm-commu1} and Theorem~\ref{thm-commu2}.

This paper is organized as follows. In Section~2  we
review the main theory of spacelike surfaces both in the isometric case and in the conformal case, together with the relations between them. The definition and basic properties of
isothermic surfaces are also discussed here. Then we introduce
the $c-$polar transforms of spacelike isothermic surfaces and the description of them as the main theorem in Section~3.
Finally, we provide a brief proof of the permutability theorems between $c-$polar transforms and the spectral transforms, the Darboux
transforms in Section~4.

\section{Surface theory for spacelike isothermic surfaces}

In this section we first give the surface theory in space forms. Then we derive the surface theory in the conformal geometry of surface theory. In the end the relations between the isometric variants of surfaces and the conformal variants of a surface is obtained.

\subsection{Spacelike isothermic surfaces in $N^n_r(c)$}

Let $\mathbb{R}^m_s$ be the space $\mathbb{R}^m$ equipped with the
quadric form
\[
\langle x,x\rangle=\sum^{m-s}_{1}x^2_i-\sum^m_{m-s+1}x^{2}_i.
\]
Let $N^n_r(c)$ denote the n-dimensional pseudo-Riemannian space form with constant curvature $c$, $c=0,1,-1$,  which has a pseudo-Riemannian metric of signature $(n-r,r)$, $n-r\geq2,r\geq0$. Then for a point $
x\in N^n_r(c),$

$$\left\{\begin{split}
&x\in  N^n_r(c)=R^n_r\subset \mathbb{R}^n_r,\ c=0,\\
&x\in  N^n_r(c)=S^n_r\subset\mathbb{R}^{n+1}_r,\ c=1,\\
&x\in  N^n_r(c)=H^n_r\subset\mathbb{R}^{n+1}_{r+1},\ c=-1.\\
\end{split}
\right.
$$
Let  $x:M\rightarrow N^n_r(c)$ be a spacelike surface with complex coordinate $z$ and metric $|dx|^2=e^{2\omega}|dz|^2$. The structure equation can be given as
\begin{equation}\label{eq-moving-1}
\left\{\begin {array}{lllll}
x_{zz}=2\omega_{z}x_z+\Omega,\\
x_{z\bar{z}}=\frac{1}{2}e^{2\omega}H-\frac{c}{2}e^{2\omega}x,\\
n_{\alpha z}=D_{z}n_\alpha-\langle n_{\alpha}, H\rangle x_{z}-2e^{-2\omega}\langle n_{\alpha}, \Omega\rangle  x_{\bar{z}},\ \alpha=3,\cdots,n.\\
\end {array}\right.
\end{equation}
Here  $\{n_\alpha\}$ is a orthonormal frame of the normal bundle with $\langle n_\alpha,n_\alpha\rangle=\varepsilon_{\alpha}=\pm1$ and $D_z$ denotes the normal connection. The $\Gamma(T^{\perp}M\otimes\mathbb{C})-$valued 2-form $\Omega dz^2=\sum_{\alpha=3}^{n}\Omega^{\alpha}n_{\alpha}dz^2$ is  the vector-valued {\em Hopf differential}.
 We also assume that the mean curvature vector $H=\sum_\alpha h^\alpha n_\alpha$.
Then we have the integrable equations:
\begin{equation}\label{eq-G-C-R}
\left\{\begin {array}{lllll}
\langle H,H\rangle-K+c=4e^{-4\omega}\langle \Omega,\bar\Omega\rangle,\\
D_zH=2e^{-2\omega}D_{\bar{z}}\Omega,\\
R^{D}_{\bar{z}z}n_{\alpha}:=D_{\bar{z}}D_{z}n_\alpha-D_{z}D_{\bar{z}}n_\alpha
=2e^{-2\omega}(\langle n_{\alpha},\Omega\rangle\bar{\Omega}-\langle
n_{\alpha},\bar{\Omega}\rangle\Omega).\\
\end{array}\right.
\end{equation}

To see the fundamental forms of $x$, we assume
that the two fundamental forms of $x$ are of the form
\begin{equation*}
I=e^{2\omega}(du^2+dv^2),\ II=\sum_{\alpha}(b^{\alpha}_{11} du^2+2b^{\alpha}_{12}dudv+b^{\alpha}_{22} dv^2)n_{\alpha}, z=u+iv.
\end{equation*}
Comparing with the coefficients in \eqref{eq-moving-1}, we have that
$$b^{\alpha}_{11}=(e^{2\omega}h^{\alpha}+\Omega^{\alpha}+\overline{\Omega^{\alpha}}),\ b^{\alpha}_{22}=(-e^{2\omega}h^{\alpha}+\Omega^{\alpha}+\overline{\Omega^{\alpha}}),\ b^{\alpha}_{12}=i(\Omega^{\alpha}-\overline{\Omega^{\alpha}}).$$

Now we define the isothermic surfaces as follow.
\begin{definition}\cite{BPP}
Let $x:M\rightarrow N^{n}_{r}(c)$ be a spacelike surface. It is called isothermic if around each
point of $M$ there exists a complex coordinate $z$ such that the vector-valued Hopf differential $\Omega$ is real-valued. Such a coordinate $z$ is called an adapted coordinate.
\end{definition}

\begin{remark}  (Comparing with the classical definition, \cite{BDPT}, \cite{Bur}, etc.) Here $\Omega$ is real-valued if and only if all of $\Omega^{\alpha}$ are real-valued functions.
The real-valued property of $\Omega$, together with the Ricci equations in
\eqref{eq-G-C-R}, shows that the normal bundle of $x$ is flat. This is
an important property of isothermic surfaces, which guarantees
that all shape operators commute and the curvature lines could
still be defined. Let $\{n_{\alpha}\}$ be a parallel orthonormal frame of the flat normal bundle.
Since $\Omega^{\alpha}=\overline{\Omega^{\alpha}}$, the two fundamental
forms of an isothermic surface are of the form
\begin{equation}\label{iso}
I=e^{2\omega}(du^2+dv^2),\  II=\sum_{\alpha}(b^{\alpha}_{11} du^2+b^{\alpha}_{22} dv^2)n_{\alpha},
\end{equation}
with respect to the parallel normal frame $\{n_{\alpha}\}$. Then
$(u,v)$ are the conformal curvature line parameters. So our definition of spacelike isothermic surfaces coincides with the classical definition:
 
{\em $x$ is isothermic if and only if there exists a pair of coordinate $(u,v)$ and  parallel normal frame $\{n_{\alpha}\}$ such that the two fundamental forms are of the form in \eqref{iso}.}
\end{remark}

\begin{remark} Another significant property of isothermic surfaces is that they are conformally invariant,
which suggests that it may be better to treat them using a conformally invariant frame.
This is the main topic in the next subsection.
\end{remark}

\subsection{Spacelike isothermic surfaces in $Q^{n}_{r}$}

Let $\mathbb{R}^m_s$  be the pseudo Euclidean space $(\mathbb{R}^m,\langle,\rangle)$ as above.
We denote by $C^{m-1}_s$ the light cone of $\mathbb{R}^m_s$, and by
\[
Q^n_r=\{\ [x]\in\mathbb{R}P^{n+1}\ |\ x\in C^{n+1}_r\setminus \{0\}
\}
\]
the projectived light cone, with the standard projection
$\pi:C^{n+1}_{r}\setminus\{0\}\rightarrow Q^n_r$. $Q^n_r$ can be equipped with a $(n-r,r)-$type pseudo-Riemannian metric
induced from the projection $\pi:S^{n-r}\times S^r\rightarrow Q^n_r$ . Here
$$S^{n-r}\times
S^r=\{x\in\mathbb{R}^{n+2}_{r+1}\ |\
\sum^{n-r+1}_{i=1}x^{2}_{i}=\sum^{n+2}_{i=n-r+2}x^2_{i}=1\}\subset
C^{n+1}_r\setminus\{0\}
$$
is endowed with a $(n-r,r)-$type pseudo-Riemannian metric
$g(S^{n-r})\oplus (-g(S^r))$, where $g(S^{n-r})$ and $g(S^r)$ are
standard metrics on $S^{n-r}$ and $S^r$. So there is a conformal
structure of $(n-r,r)-$type pseudo-Riemannian metric $[h]$ on
$Q^n_r$. It is well known that
the conformal group of $(Q^n_r,[h])$ is exactly the orthogonal group
$O(n-r+1,r+1)/\{\pm1\}$, which keeps the inner product of
$\mathbb{R}^{n+2}_{r+1}$ invariant and acts on $Q^n_r$ by
$
T([x])=[xT],\ T\in O(n-r+1,r+1).$
Then $N^n_r,\ c=0,1,-1$, can be conformally embedded as a proper subset of $Q^n_r$ via
\begin{equation}x \mapsto\left(\frac{1-\langle x,x\rangle}{2},x,\frac{1+\langle x,x\rangle}{2}\right).
\end{equation}
Note that when $c=1$, the first coordinate above equals to zero, and when $c=-1$ the last coordinate equals to zero. So in both cases we may look it as being a map into $Q^n_r$.

The basic conformal surfaces theory shows that for a spacelike surface $y:M\rightarrow Q^{n}_{r}$ with local coordinate $z$, there exists a local canonical lift $Y:U\rightarrow
C^{n+1}_{r}\setminus\{0\}$ of $y$ such that $|{\rm d}Y|^2=|{\rm d}z|^2$ in an open subset $U$ of $M$. We denote
\begin{equation}
V={\rm Span}\{Y,{\rm Re}(Y_z),{\rm Im}(Y_z),Y_{z\bar{z}}\}
\end{equation}
the conformal tangent bundle, and $V^{\perp}$ its orthogonal complement or the conformal normal bundle.
Let $D$ denote the normal connection, i.e. the induced connection
on the bundle $V^{\perp}$. Let $\psi\in \Gamma (V^{\perp})$ denote an arbitrary section.
Then the structure equation of $y$ can be derived as follows
(\cite{BPP},\cite{Ma1}).
\begin{equation}\label{eq-moving}
\left\{\begin {array}{lllll}
Y_{zz}=-\frac{s}{2}Y+\kappa,\\
Y_{z\bar{z}}=-\langle\kappa, \kappa\rangle Y+\frac{1}{2}N,\\
N_{z}=-2\langle\kappa, \kappa\rangle  Y_{z}-sY_{\bar{z}}+2D_{\bar{z}}\kappa,\\
\psi_{z}=D_{z}\psi+2\langle\psi,D_{\bar{z}}\kappa\rangle Y-2\langle\psi, \kappa\rangle Y_{\bar{z}}.
\end {array}\right.
\end{equation}
The first equation above defines two basic invariants $\kappa$ and $s$ dependent on
$z$, called the \emph{conformal Hopf differential} and the
\emph{Schwarzian derivative} of $y$, respectively (see
\cite{BPP},\cite{Ma1}).

The conformal Hopf differential defines the conformal invariant  metric
\begin{equation} g:=\langle\kappa,\bar\kappa\rangle|dz|^2,
\end{equation}
and the well-known {\em Willmore functional}
\begin{equation} W(y):=2i\int_M\langle\kappa,\bar\kappa\rangle dz\wedge d\bar{z}.
\end{equation}
For more details on this subject, we refer to \cite{BPP,Ma1,Ma2006,Ma-W1}, etc.

The conformal Gauss, Codazzi and Ricci equations as integrable
conditions are:
\begin{equation}\label{eq-G}
\frac{1}{2}s_{\bar{z}}=3\langle\kappa,D_{z}\bar{\kappa}\rangle+\langle D_{z}\kappa,\bar{\kappa}\rangle,\\
\end{equation}
\begin{equation}\label{eq-C}
{\rm
Im}\big(D_{\bar{z}}D_{\bar{z}}\kappa+\frac{1}{2}\bar{s}\kappa\big)=0,
\end{equation}\begin{equation}\label{eq-R}
R^{D}_{\bar{z}z}\psi:=D_{\bar{z}}D_{z}\psi-D_{z}D_{\bar{z}}\psi
=2(\langle \psi,\kappa\rangle\bar{\kappa}-\langle
\psi,\bar{\kappa}\rangle\kappa).\end{equation}

Spacelike {\em isothermic} surfaces are also defined as of real-valued conformal Hopf differential with respect to some complex coordinate (also called the {\em adapted coordinate}).

To see this, we give the relations between the isometric invariants and conformal invariants of a spacelike surface
 $x:M\rightarrow N^n_r(c)$ as above.  We may assume that $c=0$. The other cases are similar. We denote
\begin{equation}X=\left(\frac{1-\langle x,x\rangle}{2},x,\frac{1+\langle x,x\rangle}{2}\right).
\end{equation}
Then $y=[X]$ is the corresponding surface into $Q^n_r$. It is direct to obtain a canonical lift $Y=e^{-\omega}X$ w.r.t. $z$. So
\begin{equation}\left\{\begin{split}
&Y=\frac{1}{2}e^{-\omega}(1-\langle x,x\rangle,2x,1+\langle x,x\rangle),\\
&Y_z=-\omega_{z}Y+e^{-\omega}(-\langle x,x_z\rangle,x_z,\langle x,x_z\rangle),\\
&N=e^{\omega}(-1-\langle H,x\rangle,H,1+\langle H,x\rangle)-2\omega_{\bar{z}}Y_z-2\omega_zY_{\bar{z}}-2|\omega_z|^2Y,\\
&\psi_\alpha=(-\langle n_\alpha,x\rangle,n_\alpha,\langle n_\alpha,x\rangle)+\langle n_\alpha,H\rangle Y.\\
\end{split}\right.\end{equation}
Then we see that the normal connection of $\{\psi_\alpha\}$ is the same as  the normal connection of $\{n_\alpha\}$.
we also calculate the Schwarzian and conformal Hopf differential as
\begin{equation}\label{eq-k-tr}
s=2\omega_{zz}-2\omega_z^2+2\langle\Omega,H\rangle,\ \kappa=e^{-\omega}(-\langle \Omega,x\rangle,\Omega,\langle \Omega,x\rangle)+\langle\Omega, H\rangle Y.
\end{equation}
Now we derive that $\kappa$ is real-valued if and only if $\Omega$ is real-valued.

\section{Generalized $c-$polar transforms of spacelike isothermic surfaces}

In \cite{Ma-W2}, they gave the definition of polar transforms of spacelike isothermic surfaces in $Q^4_1$. Here we will show that such transforms can be defined in a more generalized case for isothermic surfaces.

\begin{definition}Let $y:M\rightarrow Q^{n}_{r}$  be a spacelike isothermic surface. Suppose that $\psi$ is a parallel section of the normal bundle of $y$ with length $c$, i.e. $\psi\in \Gamma(V^{\perp})$, $D_z\psi=0$, and $\langle\psi,\psi\rangle=c$.
 Then we call $\psi:M\rightarrow N^{n+1}_r(\frac{1}{c}) $ a $c-$polar transform of $y$ when $c\neq0$, and  $[\psi]:M\rightarrow Q^{n}_{r}$ a $0-$polar transform of $y$ when $c=0$.
\end{definition}

Then, the polar transforms defined in \cite{Ma-W2}
 are exactly  special cases  of the $0-$polar transforms. We also note that the polar transforms can also be characterized as below:
\begin{proposition}$\psi:M\rightarrow\mathbb{R}^{n+2}_{r+1}$ derives a $c-$polar transform of $y$ if it satisifies the following conditions: (i).$\langle\psi,Y\rangle=\langle\psi,Y_z\rangle=0$; (ii). $\psi_z\in Span_{\mathbb{C}}\{Y,Y_{\bar{z}}\}$.
\end{proposition}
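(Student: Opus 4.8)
The plan is to verify that hypotheses (i) and (ii) are equivalent to the three defining conditions of a $c$-polar transform in the definition above, namely $\psi\in\Gamma(V^{\perp})$, $D_z\psi=0$, and $\langle\psi,\psi\rangle$ constant. I would begin by recording the inner products of the canonical frame $\{Y,Y_z,Y_{\bar z},N\}$ spanning $V\otimes\mathbb{C}$. From $Y\in C^{n+1}_r$ and the canonical normalization $|\mathrm{d}Y|^2=|\mathrm{d}z|^2$ one has $\langle Y,Y\rangle=\langle Y,Y_z\rangle=\langle Y_z,Y_z\rangle=0$ and $\langle Y_z,Y_{\bar z}\rangle=\tfrac12$; differentiating $\langle Y,Y_{\bar z}\rangle=0$ and comparing with the second line of \eqref{eq-moving} gives $\langle Y,N\rangle=-1$, while $\langle N,Y_z\rangle=\langle N,N\rangle=0$. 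These relations furnish the dual basis needed to expand a real vector field along the frame.

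Next I would write $\psi=c_1Y+c_2N+c_3Y_z+c_4Y_{\bar z}+\psi^{\perp}$ with $\psi^{\perp}\in\Gamma(V^{\perp})$. Pairing against the frame yields $c_2=-\langle\psi,Y\rangle$, $c_1=-\langle\psi,N\rangle$, $c_3=2\langle\psi,Y_{\bar z}\rangle$, $c_4=2\langle\psi,Y_z\rangle$. Since $\psi$ is real, condition (i) gives $\langle\psi,Y_{\bar z}\rangle=\overline{\langle\psi,Y_z\rangle}=0$ as well, so $c_2=c_3=c_4=0$ and $\psi=c_1Y+\psi^{\perp}$ with the single unknown $c_1=-\langle\psi,N\rangle$. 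Because $\psi^{\perp}$ is an honest section of $V^{\perp}$, the fourth line of \eqref{eq-moving} applies to it, and differentiating gives
\[
\psi_z=\bigl[(c_1)_z+2\langle\psi^{\perp},D_{\bar z}\kappa\rangle\bigr]Y+c_1Y_z+D_z\psi^{\perp}-2\langle\psi^{\perp},\kappa\rangle Y_{\bar z}.
\]
Reading off the $Y_z$-component and the $V^{\perp}$-component and imposing condition (ii) forces $c_1=0$ and $D_z\psi^{\perp}=0$ simultaneously: thus $\langle\psi,N\rangle=0$, so $\psi=\psi^{\perp}\in\Gamma(V^{\perp})$, and $\psi$ is parallel.

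It then remains to check that the length is constant. With $\psi\in\Gamma(V^{\perp})$ and $D_z\psi=0$, the structure equation reduces to $\psi_z=2\langle\psi,D_{\bar z}\kappa\rangle Y-2\langle\psi,\kappa\rangle Y_{\bar z}$, so $\partial_z\langle\psi,\psi\rangle=2\langle\psi_z,\psi\rangle=0$ since $\psi\perp Y$ and $\psi\perp Y_{\bar z}$; the conjugate computation gives $\partial_{\bar z}\langle\psi,\psi\rangle=0$, whence $\langle\psi,\psi\rangle\equiv c$. Therefore $\psi$ satisfies exactly the conditions making it a $c$-polar transform of $y$. I expect the main obstacle to be organizational rather than conceptual: correctly pinning down the frame pairings (especially $\langle Y,N\rangle$) and confirming that the fourth equation of \eqref{eq-moving} may legitimately be applied to the extracted component $\psi^{\perp}$ rather than only to sections assumed normal from the outset. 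Once the decomposition is in place, hypotheses (i) and (ii) match term-by-term with the two conclusions $\psi\in\Gamma(V^{\perp})$ and $D_z\psi=0$.
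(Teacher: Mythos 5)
Your proof is correct: it establishes exactly the three defining conditions ($\psi\in\Gamma(V^{\perp})$, $D_z\psi=0$, $\langle\psi,\psi\rangle\equiv c$), which is precisely what the paper's proof does, but you get there by a different decomposition. The paper never expands $\psi$ in a frame and never touches $N$ at all: it notes that (ii) gives $\langle\psi_z,Y_{\bar z}\rangle=0$ (because $Y$ and $Y_{\bar z}$ are isotropic and mutually orthogonal), so differentiating the identity $\langle\psi,Y_{\bar z}\rangle=0$ coming from (i) yields $\langle\psi,Y_{z\bar z}\rangle=0$; this already places $\psi$ in $\Gamma(V^{\perp})$, after which the fourth equation of \eqref{eq-moving} identifies the $V^{\perp}$-part of $\psi_z$ with $D_z\psi$, which (ii) forces to vanish. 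You instead expand $\psi$ over $\{Y,N,Y_z,Y_{\bar z}\}\oplus V^{\perp}$, which obliges you to pin down the $N$-pairings --- your value $\langle Y,N\rangle=-1$ is indeed the right one for this paper's convention, since $Y_{z\bar z}=-\langle\kappa,\kappa\rangle Y+\frac{1}{2}N$ and $\langle Y,Y_{z\bar z}\rangle=-\langle Y_z,Y_{\bar z}\rangle=-\frac{1}{2}$ --- and then to kill $c_1$ and $D_z\psi^{\perp}$ simultaneously from (ii). The concern you flag about applying the fourth structure equation to $\psi^{\perp}$ is unfounded: $\psi^{\perp}=\psi-c_1Y$ is a genuine real section of $V^{\perp}$ by construction (with $c_1=-\langle\psi,N\rangle$ real), which is all that equation requires. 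The constant-length step is identical in both arguments. What your route buys is an explicit accounting of where tangential contamination could sit after imposing (i) (only along $Y$), making the role of hypothesis (ii) transparent; what the paper's route buys is brevity --- three lines, no frame vector $N$, no decomposition.
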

\begin{proof}
From (ii), we see that $\langle\psi_{z},Y_{\bar{z}}\rangle=0$, together with (i), yielding $\langle\psi,Y_{{z}\bar{z}}\rangle=0$. So $\psi$ is a parallel section of the normal bundle by (ii). Also from (ii),
$\langle\psi_{z},\psi\rangle=0$, yielding $\langle\psi,\psi\rangle=c$ for some constant $c$.
\end{proof}

\begin{remark}  The Christoffel transform of an isothermic surface $x:M\rightarrow \mathbb{R}^n$ is defined as a map $x^C:M\rightarrow \mathbb{R}^n$ such that $x^C_z\parallel x_{\bar{z}}$ (see \cite{Bur}, \cite{Palmer}) . Such $x^C$ is also called dual to $x$.  For  $c-$polar transforms, we also can change the second condition such that  $\psi_z\in Span_{\mathbb{C}}\{Y_{\bar{z}}\}$ by scaling of $Y$. In this sense, we also can say that $\psi$ is dual to $Y$, and $c-$polar transforms can be looked as generalized Christoffel transforms.

On the other hand, (spacelike) isothermic surfaces, together with one of their Darboux transforms, are also characterized as curved flats or $O(m+1,1)/O(m)\times O(1,1)-$system II in integrable system theory, see for example \cite{BHPP}, \cite{BDPT}, \cite{CGS}, \cite{Du}, \cite{ZCC}. We also note that in \cite{BHPP}, the Christoffel transform of an isothermic surface is explained as a limit surface of Darboux transforms. While there does not exist an  integrable system theory structure for the Christoffel transform. Here the integrable system description of $c-$polar transforms is also unknown.
\end{remark}

Another notion related with $c-$polar transforms is the {\em generalized H-surface}, provided by Burstall as examples of isothermic surfaces.
\begin{definition}\cite{Bur} A surface $f:M\rightarrow N^n_r(c)$, with mean curvature vector $H$, is said to be a  {\em generalized H-surface} if it admits a parallel isoperimetric section. Here an {\em isoperimetric section} means a unit normal vector field $N$ of $f$ satisfying $\langle N, H \rangle\equiv const$. $N$ is called a {\em minimal section} if $\langle N, H \rangle\equiv 0$.
\end{definition}

Now we state our main theorem as follows:
\begin{theorem}\label{thm-polar} Let $y:M\rightarrow Q^{n}_{r}$  be a full spacelike isothermic surface. That is, the image of $y$ is not contained in any affine hyperplane.

(i). When $c\neq0$, on an open dense subset $M_0$ of $M$, any $c-$polar transform $\psi:M_0\rightarrow N^{n+1}_r(\frac{1}{c})$ is also a spacelike isothermic surface sharing the same adapted coordinate with $y$. Moreover, $\psi$ is a generalized H-surface admitting a parallel minimal section. To be concrete, there exists a lift $Y^\psi$ of $y$ with $Y^\psi_z\parallel \psi_{\bar{z}}$, that is, $Y^\psi$ is dual to $\psi$.

(ii). When $c=0$,  on an open dense subset $M_0$ of $M$, any $0-$polar transform $[\psi]:M_0\rightarrow Q^{n}_r$ of $y$ is also a spacelike isothermic surface sharing the same adapted coordinate with $y$. Moreover, $\psi$ is a generalized H-surface in $\mathbb{R}^{n+2}_{r+1}$ admitting a parallel minimal section. To be concrete, there exists a lift $Y^\psi$ of $y$ with $Y^\psi_z\parallel \psi_{\bar{z}}$, that is, $Y^\psi$ is dual to $\psi$.

(iii). The moduli space of isometric $c-$polar surfaces can be described as follows:
\begin{equation}\left\{\begin{split}
&\{c-\hbox{polar surfaces of }y\}\cong S^{n-3}_r(c)/Z_2=\{x\in \mathbb{R}^{n-2}_r|\langle x, x\rangle=c\}/Z_2, c>0 .\\
&\{0-\hbox{polar surfaces of }y\}\cong Q^{n-3}_{r-1}(c)=\{x\in \mathbb{R}^{n-2}_r|\langle x, x\rangle=0\}, c=0 .\\
&\{c-\hbox{polar surfaces of }y\}\cong H^{n-3}_{r-1}(c)/Z_2=\{x\in \mathbb{R}^{n-2}_{r}|\langle x, x\rangle=c\}/Z_2, c<0 .\\
\end{split}\right.
\end{equation}

(iv). Let $\psi$ be a $c-$polar transform of $y$. The conformal invariant metric of $\psi$ is of the form
\begin{equation}g^{\psi}=\left(\langle\kappa,\kappa\rangle+\left(\frac{\langle\psi,D_{z}\kappa\rangle}{\langle\psi,\kappa\rangle}\right)_{\bar{z}}\right)|dz|^2.
\end{equation}
Furthermore, suppose that $M$ is a closed surface. If $\psi$ is globally immersed, then $W(\psi)=W(y)$, i.e., in this case, the Willmore functional is polar transform invariant.
\end{theorem}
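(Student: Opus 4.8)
The whole theorem can be read off the last line of the structure equations \eqref{eq-moving}. For a parallel normal section ($D_z\psi=0$) of length $c$ it reduces to $\psi_z=2aY-2bY_{\bar z}$ with $a:=\langle\psi,D_{\bar z}\kappa\rangle$ and $b:=\langle\psi,\kappa\rangle$; since $z$ is adapted, $\kappa$ is real, so $b$ is real and $\bar a=\langle\psi,D_z\kappa\rangle$. Using the canonical-lift normalizations $\langle Y,Y\rangle=\langle Y_z,Y_z\rangle=0$, $\langle Y_z,Y_{\bar z}\rangle=\tfrac12$, a direct computation gives $\langle\psi_z,\psi_z\rangle=0$ and $\langle\psi_z,\psi_{\bar z}\rangle=2b^2$, so the induced metric of $\psi$ is $4\langle\psi,\kappa\rangle^2|dz|^2$; it is conformal and nondegenerate precisely on $M_0=\{\langle\psi,\kappa\rangle\neq0\}$, which is open dense because $y$ is full (a direction with $\langle\psi,\kappa\rangle\equiv0$ would force $y$ into a hyperplane). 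This gives the spacelike conformal claims of (i)/(ii). For the dual lift I seek $Y^\psi=\lambda Y$ with $Y^\psi_z\parallel\psi_{\bar z}$; matching coefficients forces $(\log\lambda)_z=-\langle\psi,D_z\kappa\rangle/\langle\psi,\kappa\rangle$, whose integrability follows from the reality of $\kappa$. Finally I compute $\psi_{zz}$ from \eqref{eq-moving}, project onto the normal bundle of $\psi$ to extract its vector Hopf differential $\Omega^\psi$, check $\Omega^\psi$ is real (so $z$ is adapted for $\psi$ too), and identify a parallel normal direction annihilating $H^\psi$, which is the minimal section exhibiting the generalized H-surface property.

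For (iii), reality of $\kappa$ collapses the Ricci equation \eqref{eq-R} to $R^D\psi=2\langle\psi,\kappa\rangle(\bar\kappa-\kappa)=0$, so $V^\perp$ is flat and parallel transport identifies the space of parallel sections with a single fibre $V^\perp_p$. Since $V=\mathrm{Span}\{Y,N,\mathrm{Re}\,Y_z,\mathrm{Im}\,Y_z\}$ has signature $(3,1)$ (a hyperbolic plane spanned by the null vectors $Y,N$ together with the spacelike plane $\mathrm{Re}\,Y_z,\mathrm{Im}\,Y_z$), the fibre has signature $(n-r-2,r)$, i.e. $V^\perp_p\cong\mathbb{R}^{n-2}_r$. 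The length condition $\langle\psi,\psi\rangle=c$ then carves out the quadric $\{x\in\mathbb{R}^{n-2}_r:\langle x,x\rangle=c\}$; quotienting by $\psi\sim-\psi$ and, when $c=0$, passing to the projectivization $[\psi]$, produces the three models listed in the statement.

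For the metric formula in (iv) I use the space-form identity $g^\psi=\langle\kappa^\psi,\bar\kappa^\psi\rangle|dz|^2=\tfrac14(\langle H^\psi,H^\psi\rangle-K^\psi+\tfrac1c)I^\psi=e^{-2\omega^\psi}\langle\Omega^\psi,\bar\Omega^\psi\rangle|dz|^2$ (with the curvature term replaced by $0$ in the flat $c=0$ case), insert $e^{2\omega^\psi}=4\langle\psi,\kappa\rangle^2$ together with the $\Omega^\psi$ found above, and simplify using \eqref{eq-G}--\eqref{eq-C}. The derivatives of $\langle\psi,\kappa\rangle$ and $\langle\psi,D_z\kappa\rangle$ reorganize into $(\langle\psi,D_z\kappa\rangle/\langle\psi,\kappa\rangle)_{\bar z}$, yielding $g^\psi=(\langle\kappa,\kappa\rangle+f_{\bar z})|dz|^2$ with $f:=\langle\psi,D_z\kappa\rangle/\langle\psi,\kappa\rangle$ (and $\langle\kappa,\kappa\rangle=\langle\kappa,\bar\kappa\rangle$ by reality); equivalently $f_{\bar z}=-(\log\lambda)_{z\bar z}$ in terms of the dual-lift factor $\lambda$. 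I expect this bookkeeping — the normal projection of $\psi_{zz}$ and the tracking of connection terms — to be the most laborious step.

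The Willmore equality then reduces to $W(\psi)-W(y)=2i\int_M f_{\bar z}\,dz\wedge d\bar z=-2i\int_M d(f\,dz)$, so everything comes down to showing that $f\,dz$ is a globally defined $1$-form and then invoking Stokes' theorem. This is the real crux and is exactly where the hypotheses are used. Under a general holomorphic change $w=w(z)$, $f$ transforms like a projective connection and $f\,dz$ acquires the anomaly $-\tfrac32\,d\log(dw/dz)$, so it is not globally defined; indeed, were it global, a Gauss--Bonnet computation through $f_{\bar z}=-(\log\lambda)_{z\bar z}$ would instead produce a generally nonzero multiple of the Euler characteristic $\chi(M)$. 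The escape is that $y$ and $\psi$ share the adapted coordinate, and any two adapted (conformal curvature-line) coordinates $z=u+iv$, $w=\tilde u+i\tilde v$ of an isothermic surface differ by an affine map: the two curvature-line foliations are intrinsic, forcing $\tilde u=\tilde u(u)$, $\tilde v=\tilde v(v)$, and conformality forces these to be affine, so $w=az+b$ and $w_{zz}=0$. Hence the Schwarzian anomaly vanishes and $f\,dz$ is genuinely global on $M_0$. Since $\psi$ is globally immersed, $M\setminus M_0$ reduces to the umbilic locus, which I will argue contributes nothing to the integral; Stokes then gives $\int_M d(f\,dz)=0$ and therefore $W(\psi)=W(y)$.
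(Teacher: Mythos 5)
The genuine gap is in part (iii). What you actually prove is that the set of parallel normal sections of length $c$ is parametrized by the quadric $\{x\in\mathbb{R}^{n-2}_r:\langle x,x\rangle=c\}$, via flatness of $V^{\perp}$ and the signature count $(3,1)$ for $V$ — that part is correct and is implicit in the paper. But the statement concerns the moduli space \emph{modulo isometry of the resulting surfaces}, and you simply decree the quotient by $\psi\sim-\psi$. Two things must be shown: that $\psi$ and $-\psi$ give isometric surfaces (immediate, since both induced metrics equal $4\langle\psi,\kappa\rangle^2|dz|^2$), and — the actual content of (iii) — that no further identifications occur, i.e.\ if $\tilde\psi\neq\pm\psi$ then $\psi$ and $\tilde\psi$ are \emph{not} isometric. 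The paper proves exactly this: an isometry forces $\langle\psi_z,\psi_{\bar z}\rangle=\langle\tilde\psi_z,\tilde\psi_{\bar z}\rangle$, hence $\langle\psi,\kappa\rangle=\pm\langle\tilde\psi,\kappa\rangle$, i.e.\ $\langle\psi\mp\tilde\psi,\kappa\rangle\equiv0$; the same derivative argument you used to get density of $M_0$ then shows $\psi\mp\tilde\psi$ is constant with $y$ lying in the affine hyperplane $(\psi\mp\tilde\psi)^{\perp}$, contradicting fullness. Without this step the identification of the isometric moduli with the quadric mod $Z_2$ is unjustified: a priori the isometry relation could collapse more points than the antipodal pairs.

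Parts (i), (ii) and the metric formula in (iv) follow the paper's own route: the same expansion $\psi_z=2\langle\psi,D_{\bar z}\kappa\rangle Y-2\langle\psi,\kappa\rangle Y_{\bar z}$, the same fullness argument, the same extraction of $\Omega^{\psi}$ from $\psi_{zz}$, and the same identity $\langle\kappa^{\psi},\bar\kappa^{\psi}\rangle=\frac{1}{4\langle\psi,\kappa\rangle^2}\langle\Omega^{\psi},\Omega^{\psi}\rangle$. One simplification: you do not need an integrability argument for the dual lift, because $\langle\psi_z,\kappa\rangle=0$ and $\langle\psi,\kappa_z\rangle=\langle\psi,D_z\kappa\rangle$ give $\langle\psi,\kappa\rangle_z=\langle\psi,D_z\kappa\rangle$, so $\lambda=1/\langle\psi,\kappa\rangle$ solves your equation $(\log\lambda)_z=-\langle\psi,D_z\kappa\rangle/\langle\psi,\kappa\rangle$ explicitly; this is the paper's choice $Y^{\psi}=Y/\langle\psi,\kappa\rangle$, and this same $Y^{\psi}$ is the null parallel minimal section, so the "parallel normal direction annihilating $H^{\psi}$" you promise to find is already in hand.

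On the Willmore equality you are in fact \emph{more} careful than the paper, which disposes of it with "Stokes' formula" and the purely local identity $f_{\bar z}\,dz\wedge d\bar z=-d(f\,dz)$, $f=\langle\psi,D_z\kappa\rangle/\langle\psi,\kappa\rangle$. Since $f\,dz$ is a priori only chart-wise defined, your observation that the coordinate anomaly must vanish is the missing justification, and your resolution is right: if $\kappa$ is real and nonzero in two adapted coordinates, then $\kappa^{(w)}=\kappa^{(z)}|w_z|/w_z^2$ forces $w_z^2$ real, so the holomorphic function $w_z$ takes values in $\mathbb{R}\cup i\mathbb{R}$ and is therefore constant; hence transitions are affine and $f\,dz$ glues to a global $1$-form on $M_0$. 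One small confusion at the end: global immersedness of $\psi$ means $\langle\psi,\kappa\rangle\neq0$ everywhere, so $M_0=M$ and $\kappa$ never vanishes; there is no umbilic locus left over, and your final worry is vacuous rather than wrong.
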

\begin{proof} Here we use the notions in Section 2. Let $\psi$ be a parallel section of normal bundle $V$ with $\langle\psi,\psi\rangle=c$. Then we have that
$$\psi_{z}=2\langle\psi,D_{\bar{z}}\kappa\rangle Y-2\langle\psi, \kappa\rangle Y_{\bar{z}}.$$
So
$$\langle\psi_{z},\psi_{\bar{z}}\rangle=2\langle\psi, \kappa\rangle^2.$$

Suppose that on an open subset $U_0$ of $M$, $\langle\psi, \kappa\rangle^2=0$. Then on $U_0$,  $\langle\psi, \kappa\rangle^2=\langle\psi, D_z\kappa\rangle^2=\cdots=0$. So $\psi_z=0$ and $y$ is in the hyperplane $\psi^{\perp}$, contradicting to the full property of $y$. So there exists an open dense subset $M_0$ of $M$ on which $\psi$ is an immersion. That is, $\langle\psi, \kappa\rangle^2\neq0.$ Then
\begin{equation}\label{eq-psi-zz}
\begin{split}
\psi_{zz}&=2\langle\psi,D_zD_{\bar{z}}\kappa\rangle Y-2\langle\psi, D_z\kappa\rangle Y_{\bar{z}}+2\langle\psi,D_{\bar{z}}\kappa\rangle Y_z-2\langle\psi, \kappa\rangle Y_{z\bar{z}}\\
&=\frac{2\langle\psi,D_{z}\kappa\rangle}{\langle\psi,\kappa\rangle}\psi_{z}+\Omega^{\psi},\\
\end{split}
\end{equation}
with
\begin{equation}\label{eq-psi-k}\begin{split}
\Omega^{\psi}=-&\langle\psi,\kappa\rangle N+2\langle\psi, D_z\kappa\rangle Y_{\bar{z}}+2\langle\psi,D_{\bar{z}}\kappa\rangle Y_z
\\
&+2(\langle\psi,D_zD_{\bar{z}}\kappa\rangle+\langle\psi, \kappa\rangle\langle\kappa, \kappa\rangle-\frac{2\langle\psi, D_z\kappa\rangle\langle\psi, D_{\bar{z}}\kappa\rangle }{\langle\psi, \kappa\rangle } ) Y.
\end{split}
\end{equation}
Since the normal bundle is flat and $\kappa$ is real-valued, we derive that
$$\Omega^{\psi}=\overline{\Omega^{\psi}}.$$

To show that $\psi$ is a generalized $H-$surface, we first compute
\begin{equation}\label{eq-psi-zb}
\psi_{z\bar{z}}=-2\langle\psi, \kappa\rangle\kappa+
(\langle\psi,2D_{\bar{z}}D_{\bar{z}}\kappa+\bar{s}\kappa\rangle)Y.
\end{equation}
Set $$Y^{\psi}=\frac{1}{\langle\psi,\kappa\rangle}Y.$$
We obtain that
$$Y^\psi_z=-\frac{1}{2\langle\psi,\kappa\rangle^2}\psi_{\bar{z}}.$$
We also have that
$$\langle Y^\psi,Y^\psi\rangle=\langle Y^\psi,\psi\rangle=\langle Y^\psi,\psi_z\rangle=\langle Y^\psi,\psi_{z\bar{z}}\rangle=0,\Rightarrow \langle Y^\psi,H^\psi\rangle=0.$$
Here $H^{\psi}$ denotes the mean curvature vector of $\psi$. Summing up, $Y^\psi$ is a parallel minimal section of $\psi$ and dual to $\psi$ (in the sense of \cite{Bur}).

Now we turn to the proof of (iii). Let $\tilde{\psi}$ be a $c-$polar surface isometric to $\psi$ with $\psi\mp\tilde\psi\neq0$. Then
$$\langle \psi_z,\psi_{\bar{z}}\rangle=\langle \tilde\psi_z,\tilde\psi_{\bar{z}}\rangle,\Rightarrow \langle \psi,\kappa\rangle=\pm\langle \tilde\psi,\kappa\rangle. $$
Similar to above, we see that $y$ falls into the affine hyperplane $(\psi\mp\tilde\psi)^{\perp}$, which is not allowed. So any two $c-$polar surfaces are not isometric. Then (iii) follows directly.

For the computation of $g^\psi$, by \eqref{eq-k-tr}, \eqref{eq-psi-k}, we have that
\begin{equation*}
\begin{split}
\langle\kappa^\psi,\bar\kappa^\psi\rangle
&=\frac{1}{4\langle\psi,\kappa\rangle^2}\langle\Omega^\psi,\Omega^\psi\rangle\\
&=\langle\kappa,\kappa\rangle+\frac{\langle\psi,D_{\bar{z}}
D_z\kappa\rangle\langle\psi,\kappa\rangle-\langle\psi,D_z\kappa\rangle\langle\psi,D_{\bar{z}}\kappa\rangle}{\langle\psi,\kappa\rangle^2}\\
&=\langle\kappa,\kappa\rangle+\left(\frac{\langle\psi,D_z\kappa\rangle}{\langle\psi,\kappa\rangle}\right)_{\bar{z}}.
\end{split}
\end{equation*}

As to the Willmore functional, it is just a consequence of Stokes formula and $$\left(\frac{\langle\psi,D_z\kappa\rangle}{\langle\psi,\kappa\rangle}\right)_{\bar{z}}dz\wedge d\bar{z}=-d\left(\frac{\langle\psi,D_z\kappa\rangle}{\langle\psi,\kappa\rangle}dz\right).$$
\end{proof}

\begin{remark} One can see that all $1-$polar surfaces are in the set $\{${\em Spacelike (full) isothermic generalized $H-$surfaces in  $S^{n+1}_r$ with null parallel minimal section }$ \} $. And for any spacelike (full) isothermic generalized $H-$surfaces in  $S^{n+1}_r$ with null parallel minimal section, it is dual to some spacelike isothermic surface in $Q^n_r$ and is also a $1-$polar transform of this surface.  In this sense, we may roughly say that
this subset of spacelike isothermic generalized $H-$surfaces in  $S^{n+1}_r$ is equivalent to the set $\{\hbox{Spacelike  isothermic surfaces in } Q^n_r\}$.
\end{remark}

\begin{corollary}Let $\psi$ and $\tilde\psi$ be two $c-$polar transforms of $y$. $\psi$ and $\tilde\psi$ have the same conformal invariant metric, if and only if
\begin{equation}F(\psi,\tilde\psi,z,\bar{z}):=\frac{\langle\psi,D_{z}\kappa\rangle\langle\tilde\psi,\kappa\rangle-
\langle\psi,\kappa\rangle\langle\tilde\psi,D_{z}\kappa\rangle}{\langle\psi,\kappa\rangle\langle\tilde\psi,\kappa\rangle}
\end{equation}
is a holomorphic function of $z$. As a consequence, if $F(\psi,\tilde\psi,z,\bar{z})$ is not a holomorphic function, they are not conformally equivalent to each other.
\end{corollary}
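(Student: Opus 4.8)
The plan is to read the result straight off the metric formula established in part~(iv) of Theorem~\ref{thm-polar}, so the work is algebraic bookkeeping rather than new geometric input. Applying that formula to the two $c-$polar transforms $\psi$ and $\tilde\psi$ of the same surface $y$ (which, by parts~(i)--(ii), share the adapted coordinate $z$ with $y$), I obtain
\begin{equation*}
g^{\psi}=\left(\langle\kappa,\kappa\rangle+\left(\frac{\langle\psi,D_{z}\kappa\rangle}{\langle\psi,\kappa\rangle}\right)_{\bar{z}}\right)|dz|^2,\qquad
g^{\tilde\psi}=\left(\langle\kappa,\kappa\rangle+\left(\frac{\langle\tilde\psi,D_{z}\kappa\rangle}{\langle\tilde\psi,\kappa\rangle}\right)_{\bar{z}}\right)|dz|^2,
\end{equation*}
where $\kappa$ is the conformal Hopf differential of $y$. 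Both expressions are written in the common coordinate $z$ and carry the identical leading term $\langle\kappa,\kappa\rangle|dz|^2$, so comparing the two metrics reduces to comparing the two correction terms.

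Next I would subtract. Since the conformal invariant metric is a genuine (conformally invariant) tensor and both $g^{\psi}$ and $g^{\tilde\psi}$ appear as a coefficient function times the same $|dz|^2$, the equality $g^{\psi}=g^{\tilde\psi}$ holds if and only if the coefficient functions agree, i.e.
\begin{equation*}
\left(\frac{\langle\psi,D_{z}\kappa\rangle}{\langle\psi,\kappa\rangle}\right)_{\bar{z}}=\left(\frac{\langle\tilde\psi,D_{z}\kappa\rangle}{\langle\tilde\psi,\kappa\rangle}\right)_{\bar{z}}.
\end{equation*}
Using linearity of $\partial_{\bar z}$ and then placing the two quotients over the common denominator $\langle\psi,\kappa\rangle\langle\tilde\psi,\kappa\rangle$, I recognize
\begin{equation*}
\frac{\langle\psi,D_{z}\kappa\rangle}{\langle\psi,\kappa\rangle}-\frac{\langle\tilde\psi,D_{z}\kappa\rangle}{\langle\tilde\psi,\kappa\rangle}
=\frac{\langle\psi,D_{z}\kappa\rangle\langle\tilde\psi,\kappa\rangle-\langle\psi,\kappa\rangle\langle\tilde\psi,D_{z}\kappa\rangle}{\langle\psi,\kappa\rangle\langle\tilde\psi,\kappa\rangle}
=F(\psi,\tilde\psi,z,\bar{z}).
\end{equation*}
Hence equality of the two correction terms is exactly $F_{\bar z}=0$, which is precisely the condition that $F$ be holomorphic in $z$; the last sentence of the statement is then just the contrapositive.

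The only points needing a little care are domain and regularity, and that is where I would focus the rigorous write-up. The formula from~(iv) is valid only on the open dense set where $\langle\psi,\kappa\rangle\neq0$, and analogously for $\tilde\psi$, so strictly I work on the intersection of the two dense sets $M_0$, which is again open and dense; since the quantities are real-analytic in the adapted coordinate, agreement of the two metrics there is equivalent to agreement everywhere, and likewise ``$F$ holomorphic on the dense set'' is the natural reading of the claim. I do not expect any genuine obstacle: the content of the corollary is entirely contained in part~(iv), and the proof amounts to identifying the combination $\langle\psi,D_z\kappa\rangle/\langle\psi,\kappa\rangle-\langle\tilde\psi,D_z\kappa\rangle/\langle\tilde\psi,\kappa\rangle$ with the defined quantity $F$ and observing that \emph{holomorphic} means \emph{annihilated by} $\partial_{\bar z}$.
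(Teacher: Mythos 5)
Your proposal is correct and is exactly the argument the paper intends: the corollary is stated without proof as an immediate consequence of Theorem~\ref{thm-polar}(iv), and your write-up (subtracting the two metric formulas, combining the quotients over a common denominator to recognize $F$, and identifying ``holomorphic'' with $F_{\bar z}=0$ on the common dense set where $\langle\psi,\kappa\rangle\langle\tilde\psi,\kappa\rangle\neq0$) is precisely that derivation. Your handling of the final sentence is also right, since conformal equivalence preserves the conformal invariant metric, so non-holomorphic $F$ rules it out by contraposition.
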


\begin{remark} It is in general unknown how to determine the conformal moduli space of $c-$polar transforms, i.e., the   $c-$polar transforms which are not conformally equivalent to each other. Below we provide examples with conformally equivalent polar transforms.
\end{remark}

\begin{example}(\cite{Ma-W1}).
In \cite{Ma-W1}, they constructed a class of homogenous spacelike tori which are both Willmore and isothermic. Set
$\varphi=\varphi(t,\theta)=\theta/\sqrt{t^{2}-1}$. Then
$Y_{t}(\theta,\phi):\mathbb{R}\times\mathbb{R}\rightarrow
\mathbb{R}^{6}_{2}$ is given by
\[
Y_{t}(\theta,\phi)=\left(\cos(t\varphi)\cos\phi,\cos(t\varphi)\sin\phi,
\sin(t\varphi)\cos\phi,\sin(t\varphi)\sin\phi,\cos\varphi,\sin\varphi\right).
\]
Note that the period condition is satisfied if $t$ is a rational
number; hence after projection $\pi$ we obtain an immersed torus. The polar transforms of $Y_t$ are  $[L_t]$ and $[R_t]$, which are globally immersed. Then by the theorem above, we see that
$$W(Y_t)=W(L_t)=W(R_t).$$
To be concrete, by generalizing the fundamental theorem of conformal surface theory in \cite{BPP}, one can show that $[L_t]$ and $[R_t]$ are both conformally equivalent to $[Y_t]$.
For further details see \cite{Ma-W1}.
\end{example}

Now we provide some other isothermic examples.
\begin{example} Set $I=[0,l]$ and $I=[0,\tilde{l}]$ . Let $\gamma:  I\rightarrow \mathbb{R}^{n_1}_{r_1}$ and $\tilde\gamma: \tilde{I}\rightarrow \mathbb{R}^{n_2}_{r_2}$ be two spacelike curves with arc parameter $t,\tilde{t}$ respectively.
Then $$x=(\gamma,\tilde\gamma):I\times\tilde{I}\rightarrow \mathbb{R}^n_r,\ n=n_1+n_2,\ r=r_1+r_2,$$
is a spacelike isothermic surface with adapted coordinate $z=t+i\tilde{t}$.

 For simplicity, we assume that $\gamma$ and $\tilde\gamma$ are both in $\mathbb{R}^2$. And set
$$\gamma_t=\alpha,\ \alpha_t=k\beta,\ \tilde\gamma_{\tilde{t}}=\tilde\alpha,\ \tilde\alpha_{\tilde{t}}=\tilde{k}\tilde\beta,$$
with $(\alpha,0),(\beta,0),(0,\tilde\alpha),(0,\tilde\beta)$ an orthonormal basis of $\mathbb{R}^4$. The mean curvature of $x$ is $H=\frac{1}{2}(k\beta,\tilde{k}\tilde{\beta})$. And $\{(\beta,0),(0,\tilde\beta)\}$ is a parallel orthonormal basis of normal bundle. So we see that $x$ is generalized $H-$surface if and only if one of $k$, $\tilde{k}$ is constant.

On the other hand, any $1-$polar transforms of $x$ is of the form
$$\psi^\theta=(-\langle n^\theta,x\rangle,n^\theta,\langle n^\theta,x\rangle)+\frac{1}{2}(k\cos\theta+\tilde{k}\sin\theta)(\frac{1-\langle x,x\rangle}{2},x,\frac{1+\langle x,x\rangle}{2})
$$
with $n^\theta=(\beta\cos\theta,\tilde\beta\sin\theta)$ for any $\theta\in[0,\pi)$. And by computation, $\langle\psi^\theta,\kappa\rangle=k\cos\theta-\tilde{k}\sin\theta$. So $\psi^0$ and $\psi^{\frac{\pi}{2}}$ must be not conformally equivalent to each other when $k_t^2+\tilde{k}_{\tilde{t}}^2\neq0$. And if both $k$ and $\tilde{k}$
are constant, $x$ is just in a $3-$dimensional space form and there exists only one $c-$polar surface of $x$.

If we assume furthermore that both $\gamma$ and $\tilde{\gamma}$ are closed curves with $k\tilde{k}>0$, that is, $x$ is an immersed 2-torus, then for any $\theta$ such that $k\cos\theta-\tilde{k}\sin\theta\neq0$ on $T=x(I\times\tilde{I})$, $\psi^\theta$ is also an immersed 2-torus with $W(\psi^\theta)=W(x)$. Note that a special example of $x$ is just the Clifford torus with $\gamma=\tilde\gamma=S^1(\rho^2)\subset\mathbb{R}^2$, which is conformally equivalent to any $c-$polar surface of it.
\end{example}

\section{Two permutability theorems}

Similar to the proof in \cite{Ma-W2}, we also can obtain the permutability theorems for $c-$polar transforms with
spectral transforms and Darboux transforms. For simplicity, here we just give a rough proof.

\subsection{Permutability with spectral transform}

For an immersed spacelike
isothermic surface $y:M \rightarrow Q^{n}_{r}$, the
conformal Gauss, Codazzi, and Ricci equations are still satisfied
under the deformation
\[
s^{\tilde{c}}=s+\tilde{c},~\langle\kappa^{\tilde{c}},\psi^{\tilde{c}}\rangle=\langle\kappa,\psi\rangle,
~\langle\psi^{\tilde{c}},\psi^{\tilde{c}}\rangle=\langle\psi,\psi\rangle,
~D_z^{\tilde{c}}=D_z,
\]
where $\tilde{c}\in\mathbb{R}$ is a real parameter, and $\psi$ is arbitrary section of normal bundle with deforming normal section $\psi^{\tilde{c}}$. By
the integrable conditions, there are an associated family of
non-congruent isothermic surfaces $[Y^c]$ with corresponding
invariants. They are
called the \emph{spectral transforms} of the original surface (see
\cite{BPP}).

\begin{theorem}\label{thm-commu1}
Let $y^{\tilde{c}}$ be a spectral transform (with parameter $\tilde{c}$) of
$y:M\rightarrow Q^{n}_{r}$, both being spacelike isothermic
surfaces. Denote their canonical lift as $Y, Y^{\tilde{c}}$ for the same
adapted coordinate $z$. If a $c-$polar surface $\psi$ is non-degenerate, then there exists a $c-$polar surface $\psi^{\tilde{c}}$ of $y^{\tilde{c}}$ such that $\psi^{\tilde{c}}$ is
also a spectral transform (with parameter $c$) of $\psi$, i.e., we
have the commuting diagram:
\begin{equation*}
\begin{CD}
[Y]@>{}>> [Y^{\tilde{c}}] \\
@V{}VV @V{}VV\\
\psi @>{}>> ~\psi^{\tilde{c}}\
\end{CD}
\end{equation*}
\end{theorem}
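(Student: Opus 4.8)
The plan is to verify the commuting square invariantly. By the fundamental theorem of conformal surface theory a spacelike surface is determined up to conformal motion by its triple $(s,\kappa,D)$, and both operations in the diagram are dictated entirely by this triple: the spectral flow acts by $s\mapsto s+\tilde c$, $D\mapsto D$, $\langle\kappa^{\tilde c},\phi^{\tilde c}\rangle=\langle\kappa,\phi\rangle$, $\langle\phi^{\tilde c},\phi^{\tilde c}\rangle=\langle\phi,\phi\rangle$ for every normal section $\phi$, while the $c$-polar surface is manufactured out of $Y,\kappa,D$ and the parallel section $\psi$ through the structure equations \eqref{eq-psi-zz}--\eqref{eq-psi-zb} from the proof of Theorem~\ref{thm-polar}. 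So it suffices to (a) exhibit the candidate $\psi^{\tilde c}$ and show it is a $c$-polar surface of $y^{\tilde c}$, and (b) compute the invariants of $\psi$ and of $\psi^{\tilde c}$ and recognise the second as a spectral transform of the first.

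For (a) I would take $\psi^{\tilde c}$ to be the section of the normal bundle of $y^{\tilde c}$ that the spectral flow assigns to $\psi$. Because the flow fixes $D$, the relation $D_z\psi=0$ forces $D_z^{\tilde c}\psi^{\tilde c}=0$, so $\psi^{\tilde c}$ is again parallel; because it preserves lengths, $\langle\psi^{\tilde c},\psi^{\tilde c}\rangle=c$; and by definition $\langle\psi^{\tilde c},\kappa^{\tilde c}\rangle=\langle\psi,\kappa\rangle$. Thus $\psi^{\tilde c}$ is a parallel length-$c$ normal field of $y^{\tilde c}$, i.e. a $c$-polar surface, and since $\langle\psi^{\tilde c},\kappa^{\tilde c}\rangle=\langle\psi,\kappa\rangle\neq0$ on $M_0$ the non-degeneracy passes to $\psi^{\tilde c}$. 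By Theorem~\ref{thm-polar} both $\psi$ and $\psi^{\tilde c}$ are then spacelike isothermic in the coordinate $z$. This installs the right vertical arrow and the lower-right vertex.

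For (b) I would read the invariants of a $c$-polar surface off \eqref{eq-psi-zz}--\eqref{eq-psi-zb}. Writing $a:=\langle\psi,\kappa\rangle$, one has $\langle\psi,D_z\kappa\rangle=a_z$ and $\langle\psi,D_zD_z\kappa\rangle=a_{zz}$ (using $D_z\psi=0$ and $\psi,\kappa\in V^{\perp}$), so part (iv) of Theorem~\ref{thm-polar} gives the conformal metric of $\psi$ as $\big(\langle\kappa,\bar\kappa\rangle+(\log a)_{z\bar z}\big)|dz|^2$, its normal connection is inherited from $D$, and a short computation from \eqref{eq-psi-zz}, together with the Codazzi equation \eqref{eq-C} (which makes $2D_{\bar z}D_{\bar z}\kappa+\bar s\kappa$ real, hence equal to $2D_zD_z\kappa+s\kappa$ when paired with the real field $\psi$), shows that the Schwarzian of $\psi$ has the form $s^{\psi}=s+P$ in which $s$ enters additively with coefficient one and the remainder $P$ is built solely from $a$, $D$, $\langle\kappa,\bar\kappa\rangle$ and the fixed length $c$. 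The decisive structural fact is that every dependence on the base surface here is through $a$, $\langle\kappa,\bar\kappa\rangle$ and $D$ alone, all of which the spectral flow leaves fixed (the first by its defining relation, the second because $\langle\kappa^{\tilde c},\phi^{\tilde c}\rangle=\langle\kappa,\phi\rangle$ identifies $\kappa^{\tilde c}$ isometrically with $\kappa$, the third tautologically). Replacing $y$ by $y^{\tilde c}$ therefore leaves the conformal metric and $P$ untouched and only sends $s\mapsto s+\tilde c$, so that $s^{\psi^{\tilde c}}=s^{\psi}+\tilde c$, while $D^{\psi^{\tilde c}}=D^{\psi}$ and $\kappa^{\psi^{\tilde c}}$ agrees with $\kappa^{\psi}$ on all pairings against corresponding normal sections. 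By the fundamental theorem this says exactly that $\psi^{\tilde c}$ is the spectral transform of $\psi$ with the same parameter $\tilde c$, closing the square.

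The main obstacle is the Schwarzian computation in (b): one must isolate the contributions of $s$ to $s^{\psi}$ — which enter through $N$ in \eqref{eq-psi-k} and through the combination $2D_{\bar z}D_{\bar z}\kappa+\bar s\kappa$ in \eqref{eq-psi-zb} — and verify that they assemble into a single additive $s$ with no residual $s$-dependence hidden in $P$; the reality supplied by \eqref{eq-C} is precisely what lets these terms combine. A second, more clerical obstacle is matching the $\kappa$-pairing and length conditions of the spectral transform of $\psi$ section by section: this requires fixing a frame for the normal bundle of the polar surface (spanned by the null parallel minimal section $Y^{\psi}=Y/a$ together with the images of the remaining normal directions of $y$) and transporting it through the spectral flow, where once more the invariance of $a$ and $D$ does the work.
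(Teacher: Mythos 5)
Your proposal is correct and takes essentially the same route as the paper: the paper also defines $\psi^{\tilde c}$ as the normal section that the spectral flow assigns to $\psi$ (parallelism, the length $c$, and the pairing $\langle\psi,\kappa\rangle$ being preserved makes it a $c$-polar transform of $y^{\tilde c}$), and then reduces everything to the Schwarzian computation $s^{\psi}=2\omega^{\psi}_{zz}-2(\omega^{\psi}_z)^2+\frac{\langle\psi,D_{\bar z}D_{\bar z}\kappa\rangle}{\langle\psi,\kappa\rangle}+s$, where $s$ enters additively and the remaining terms depend only on $a=\langle\psi,\kappa\rangle$, hence $s^{\psi^{\tilde c}}=s^{\psi}+\tilde c$. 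Your added details (the identities $\langle\psi,D_z\kappa\rangle=a_z$, $\langle\psi,D_{\bar z}D_{\bar z}\kappa\rangle=a_{\bar z\bar z}$, and the use of the Codazzi equation \eqref{eq-C} to realify the $\bar s$ term) simply make explicit what the paper leaves implicit.
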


\begin{proof}Since the transform preserve normal connection and $\kappa$,
let $\psi^{\tilde{c}}$ be the corresponding normal section of $\psi$, then $\psi^{\tilde{c}}$ is a $c-$polar transform of $y^{\tilde{c}}$. From the equations \eqref{eq-k-tr},\eqref{eq-psi-k},\eqref{eq-psi-zz},\eqref{eq-psi-zb}, we see that the other conditions except Schwarzian are all satisfied. As to $s$, one computes $$s^\psi=2\omega^\psi_{zz}-2(\omega^\psi_z)^2+\frac{\langle\psi, D_{\bar{z}}D_{\bar{z}}\kappa\rangle}{\langle\psi, \kappa\rangle }+s.$$
So we have $s^{\psi^{\tilde{c}}}=s^\psi+\tilde{c}$. We conclude that $\psi^{\tilde{c}}$ is  a spectral transform
of $\psi$ with parameter $\tilde{c}$.
\end{proof}

\subsection{Permutability with Darboux transform}

We recall the basic properties of Darboux transforms as below. For more details, see \cite{BDPT}, \cite{Bur}, \cite{Jer},
\cite{Ma1}, \cite{Ma2006}, \cite{Ma-W2}.
\\
\\
\noindent {\bf Definition and Proposition}  \cite{Ma-W2} {\it Let
$y:M\rightarrow Q^{n}_{r}$ denote a spacelike isothermic surface
with canonical lift $[Y]$ with respect to the adapted coordinate
$z$. A spacelike immersion $y^{\ast}: M\rightarrow Q^{n}_{r}$ is
called a {\rm Darboux transform} of $y$ if its local lift
$Y^{\ast}$ satisfies
\begin{equation}\label{eq-isoth}
\langle Y, Y^*\rangle\neq0,\ Y^{\ast}_{z}\in {\rm Span}_{\mathbb{C}}\{Y^{\ast},Y,Y_{\bar{z}}\}.
\end{equation}
Note that this is well-defined, where $Y^{\ast}$ is not
necessarily the canonical lift. We have the following conclusions:
\par 1) $y,y^{\ast}$ are conformal; they envelop one and the same
round 2-sphere congruence given by ${\rm Span} \{Y,Y^{\ast},{\rm
d}Y)\}={\rm Span} \{Y,Y^{\ast},{\rm d}Y^{\ast}\}$.
\par 2) Set $\langle Y,Y^{\ast}\rangle =-1$. Then
$Y^{\ast}_z=\frac{\mu}{2}Y^{\ast}+\theta(Y_{\bar{z}}+\frac{\bar\mu}{2}Y)$,
where $\theta$ is a non-zero real constant. This Darboux transform
is specified as \emph{$D^{\theta}$-transform}.
\par 3) $Y^{\ast}$ is an isothermic surface sharing the same
adapted coordinate $z$. Hence the curvature lines of
~$y,\tilde{y}$ do correspond.}

For the proof, we refer to \cite{Ma-W2}. Now we have the permutability theorem as below.

\begin{theorem}\label{thm-commu2}
Let $y:M\rightarrow Q^{n}_{r}$ be a spacelike isothermic surface
and $[Y^{\ast}]$ be a $D^\theta$-transform of $y$.
If $\psi$ is a non-degenerate $c-$polar surface of $y$, then there exists a $c-$polar surface $\psi^{\ast}$ of $y^{\ast}$ such that $\psi^{\ast}$ is
also a $D^\theta$-transform of $\psi$, i.e., we
have the commuting diagram:\begin{equation}
\begin{CD}
[Y] @>D^\theta-{\rm transform}>> [Y^{\ast}] \\
@V{left\ c-polar}VV @VV{left\ c-polar}V\\
\psi @>D^\theta-{\rm transform}>> ~\psi^{\ast}\
\end{CD}
\end{equation}
\end{theorem}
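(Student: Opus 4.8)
The plan is to imitate the construction used for the spectral transform in Theorem~\ref{thm-commu1} and in \cite{Ma-W2}: instead of abstractly transporting the parallel section, I would write down an explicit candidate $\psi^{\ast}$ built from $\psi$, the canonical lift $Y$, and the Darboux lift $Y^{\ast}$ (normalized by $\langle Y,Y^{\ast}\rangle=-1$), and then verify the two families of defining conditions. For $\psi^{\ast}$ to be a $c$-polar surface of $y^{\ast}$ I would use the characterization $\langle\psi^{\ast},Y^{\ast}\rangle=\langle\psi^{\ast},Y^{\ast}_{z}\rangle=0$, $\psi^{\ast}_{z}\in\mathrm{Span}_{\mathbb C}\{Y^{\ast},Y^{\ast}_{\bar z}\}$, $\langle\psi^{\ast},\psi^{\ast}\rangle=c$; for $\psi^{\ast}$ to be a $D^{\theta}$-transform of $\psi$ I would use the lift condition $\psi^{\ast}_{z}\in\mathrm{Span}_{\mathbb C}\{\psi^{\ast},\psi,\psi_{\bar z}\}$ together with $\langle\psi,\psi^{\ast}\rangle\neq0$. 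The heuristic is that $\psi$ is already orthogonal to $Y,Y_{z},Y_{\bar z}$, so I only need to correct it by terms tangent to $y^{\ast}$ to make it normal to $y^{\ast}$ while keeping the length $c$ fixed.

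Concretely I would set \[ \psi^{\ast}=\psi+\langle\psi,Y^{\ast}\rangle\,Y+\frac{2\langle\psi,\kappa\rangle}{\theta}\,Y^{\ast}, \] which is real since $\kappa$ is real-valued in the adapted coordinate. First I would verify the $c$-polar conditions for $y^{\ast}$. Because $Y$ and $Y^{\ast}$ are null and $\psi\perp\{Y,Y_{z},Y_{\bar z}\}$, the coefficient $\langle\psi,Y^{\ast}\rangle$ of $Y$ is exactly the one forcing both $\langle\psi^{\ast},\psi^{\ast}\rangle=c$ and $\langle\psi^{\ast},Y^{\ast}\rangle=0$; a short computation using $\langle Y^{\ast},Y_{z}\rangle=\mu/2$ then gives $\langle\psi^{\ast},Y^{\ast}_{z}\rangle=0$ for free. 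The remaining condition $\psi^{\ast}_{z}\in\mathrm{Span}_{\mathbb C}\{Y^{\ast},Y^{\ast}_{\bar z}\}$ is what pins down the coefficient of $Y^{\ast}$: expanding $\psi^{\ast}_{z}$ in the frame $\{Y,Y_{z},Y_{\bar z},Y^{\ast}\}$ via \eqref{eq-moving} and the Darboux equation for $Y^{\ast}$, the $Y_{\bar z}$-component vanishes precisely when that coefficient equals $2\langle\psi,\kappa\rangle/\theta$, and the surviving $Y,Y_{z}$-terms then combine into a multiple of $Y_{z}+\tfrac{\mu}{2}Y$, hence of $Y^{\ast}_{\bar z}$ modulo $Y^{\ast}$. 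This is where the non-degeneracy hypothesis $\langle\psi,\kappa\rangle\neq0$ is used.

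Next I would check that $\psi^{\ast}$ is a Darboux transform of $\psi$. Re-expanding the very same $\psi^{\ast}_{z}$, now in the frame $\{\psi^{\ast},\psi,\psi_{\bar z},Y\}$ (using $\psi_{z},\psi_{\bar z}$ from \eqref{eq-moving} to trade $Y_{z}$ and $Y^{\ast}$ against $\psi_{\bar z}$ and $\psi^{\ast}$, together with \eqref{eq-psi-k}), one finds $\psi^{\ast}_{z}\in\mathrm{Span}_{\mathbb C}\{\psi^{\ast},\psi,\psi_{\bar z}\}$ provided a single leftover coefficient of $Y$ cancels. That cancellation is exactly the identity $\big(\log\langle\psi,\kappa\rangle\big)_{z}=\langle\psi,D_{z}\kappa\rangle/\langle\psi,\kappa\rangle$, which itself follows from $\langle\psi_{z},\kappa\rangle=0$ and $\kappa\in\Gamma(V^{\perp})$. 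Together with $\langle\psi,\psi^{\ast}\rangle\neq0$ on the non-degenerate locus, this establishes the lift condition and hence the commuting square.

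The hard part will be the final assertion that this Darboux transform carries the same parameter $\theta$. In contrast to the spectral case, $\psi$ is not the canonical lift of $[\psi]$: its induced metric is $2\langle\psi,\kappa\rangle^{2}|dz|^{2}$ (cf. \eqref{eq-k-tr}) and, when $c\neq0$, $\psi$ lives in a space form rather than on the light cone. Reading the parameter directly off the expansion \[ \psi^{\ast}_{z}=\Big(\tfrac{\langle\psi,D_{z}\kappa\rangle}{\langle\psi,\kappa\rangle}+\tfrac{\mu}{2}\Big)(\psi^{\ast}-\psi)-\tfrac{\langle\psi,Y^{\ast}\rangle}{2\langle\psi,\kappa\rangle}\,\psi_{\bar z} \] therefore produces a function, not a constant, in the $\psi_{\bar z}$ slot. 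To extract the genuine conformally invariant Darboux parameter I would rescale $\psi$ and $\psi^{\ast}$ to their canonical lifts, carry the conformal factor $\langle\psi,\kappa\rangle$ through the normalization $\langle\hat\psi,\hat\psi^{\ast}\rangle=-1$, and show that the surviving real constant equals $\theta$. This is precisely the statement that the spectral parameter of a Darboux transform is preserved under the duality $Y\leftrightarrow\psi$, and it is the only point in the argument that goes beyond a routine frame expansion.
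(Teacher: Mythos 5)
Your route is the same as the paper's: the paper also proves the theorem by expanding the Darboux lift as $Y^{\ast}=N+\bar\mu Y_z+\mu Y_{\bar z}+(\cdots)Y+\xi$ and taking the explicit ansatz $\psi^{\ast}=\psi+\langle\psi,\xi\rangle Y+a\,Y^{\ast}$ (since $\psi\perp V$, one has $\langle\psi,Y^{\ast}\rangle=\langle\psi,\xi\rangle$, so your $Y$-coefficient is literally the paper's), then checking that $\psi^{\ast}$ is a parallel length-$c$ normal section of $Y^{\ast}$ and that the pair $(\psi,\psi^{\ast})$ satisfies the Darboux condition. One substantive discrepancy deserves record: your coefficient $a=2\langle\psi,\kappa\rangle/\theta$ differs from the paper's printed $a=-\langle\psi,\kappa\rangle/\theta$, and yours is the one forced by the paper's own conventions. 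Indeed, with $\bar P=Y_{\bar z}+\tfrac{\bar\mu}{2}Y$, \eqref{eq-moving} gives $\psi_z=-2\langle\psi,\kappa\rangle\bar P+(\cdots)Y$, while $Y^{\ast}_z=\tfrac{\mu}{2}Y^{\ast}+\theta\bar P$, so the $\bar P$-component of $\psi^{\ast}_z$ is $a\theta-2\langle\psi,\kappa\rangle$; likewise a direct computation with $P_z$ yields $\langle\psi^{\ast},Y^{\ast}_{z\bar z}\rangle=\theta\bigl(\langle\psi,\kappa\rangle-\tfrac{a\theta}{2}\bigr)$. Both vanish exactly when $a\theta=2\langle\psi,\kappa\rangle$, whereas the conditions $\langle\psi^{\ast},Y^{\ast}\rangle=\langle\psi^{\ast},Y^{\ast}_z\rangle=0$ and $\langle\psi^{\ast},\psi^{\ast}\rangle=c$ hold for \emph{every} $a$ — which is presumably why the error in the paper's formula is easy to miss. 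So on this point your proposal corrects the paper rather than deviating from it, and your displayed expansion $\psi^{\ast}_{z}=\bigl(\tfrac{\langle\psi,D_{z}\kappa\rangle}{\langle\psi,\kappa\rangle}+\tfrac{\mu}{2}\bigr)(\psi^{\ast}-\psi)-\tfrac{\langle\psi,\xi\rangle}{2\langle\psi,\kappa\rangle}\psi_{\bar z}$ is exactly what the structure equations give.

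The step you flag as hard — that $\psi^{\ast}$ is a Darboux transform of $\psi$ \emph{with the same parameter} $\theta$ — is indeed the step neither you nor the paper actually carries out: the paper dismisses it with ``by similar computations as \cite{Ma-W2}'', while you correctly observe that the coefficient of $\psi_{\bar z}$ above is a function, not a constant, so the parameter can only be extracted after renormalizing the pair (and, when $c\neq0$, after making precise what a $D^{\theta}$-transform of a surface in $N^{n+1}_r(\tfrac1c)$ means, since $\psi$ is then not a light-cone lift at all). Your normalization plan — rescale to lifts with $\langle\hat\psi,\hat\psi^{\ast}\rangle=-1$ and track the conformal factor $\langle\psi,\kappa\rangle$ — is the right way to finish, but in your write-up it remains a sketch; completing that computation would make your argument strictly more complete than the paper's published proof, which is silent on this point.
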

\begin{proof}

We may assume that
\[
Y^{\ast}=N+\bar{\mu}Y_z +\mu
Y_{\bar{z}}+(\frac{1}{2}|\mu|^2-\langle\xi,\xi\rangle)Y+\xi.
\]
Then differentiating $Y^{\ast}$, we obtain
\begin{equation}\theta=\mu_z-s-\frac{\mu^2}{2}-2\langle\kappa,\xi\rangle.\end{equation}
One can verify that $\theta$ is a constant via the integrable equations, following the methods in \cite{Ma1}, \cite{Ma2006}, \cite{Ma-W2}.

Set
\begin{equation}P=Y_z+\frac{\mu}{2}Y,\end{equation}
the structure equations can be written as
\begin{equation}\label{eq-moving-space-iso-1}
\left\{\begin {array}{lllll}
Y_{z}=-\frac{\mu}{2}Y+P,\\[1mm]
Y^{\ast}_{z}=~\frac{\mu}{2}Y^{\ast}+\theta \bar{P},\\[1mm]
P_z=~\frac{\mu}{2}P+(\frac{\theta}{2}+\langle\kappa,\xi\rangle)Y+\kappa,\\[1mm]
\bar{P}_{z}=-\frac{\mu}{2}\bar{P}+\frac{1}{2}Y^{\ast}+\langle\xi,\xi\rangle Y-\frac{1}{2}\xi,\\[1mm]
\psi_{z}=-2\langle\psi,\kappa\rangle\bar{P}-\langle\psi,D_{\bar{z}}\xi\rangle Y.
\end {array}\right.
\end{equation}

Set \begin{equation}\psi^{\ast}=\psi+\langle \psi,\xi\rangle Y-\frac{\langle \psi,\kappa\rangle}{\theta}Y^{\ast}.\end{equation}
It is direct to verify that $$\psi^{\ast}\perp\{Y^{\ast},Y^{\ast}_z,Y^{\ast}_{z\bar{z}} \},$$
and $\psi^{\ast}$ is a parallel section of $Y^{\ast}$ with length $c$. By similar computations as \cite{Ma-W2}, one can prove
 that $\psi^{\ast}$ is a Darboux transform of $\psi$
with the same parameter $\theta$.
\end{proof}

{\bf Acknowledgments}
The author is thankful to Professor Xiang Ma for valuable discussions and suggestions.

\def\refname{Reference}

\vspace{5mm}  \noindent Peng Wang, {\small Department of Mathematics, Tongji University,
Siping Road 1239, Shanghai, 200092, People's Republic
of China. e-mail: {\sf netwangpeng@mail.tongji.edu.cn}
\end{document}